\documentclass[12pt,a4paper]{article}

\usepackage{amsmath,amssymb,amsthm}
\usepackage{mathrsfs}
\usepackage{mathtools}
\usepackage{enumitem}
\usepackage[margin=2.5cm]{geometry}
\usepackage{url,hyperref}
\usepackage{cleveref}
\usepackage[shortcuts]{extdash}

\newtheorem{theorem}{Theorem}[section]
\newtheorem{lemma}[theorem]{Lemma}

\newtheorem{definition}[theorem]{Definition}
\theoremstyle{remark}

\DeclareMathOperator{\St}{St}
\DeclareMathOperator{\val}{val}

\newcommand{\R}{\mathbb{R}}
\newcommand{\N}{\mathbb{N}}
\newcommand{\Z}{\mathbb{Z}}
\newcommand{\Rzl}{\mathbb{R}^{\mathbb{Z}_{<}}}   
\newcommand{\F}{\mathscr{F}}

\newcommand{\Lint}{\operatorname{I}}          
\newcommand{\Smeas}{\mu}                     

\newcommand{\chunkS}{\Sigma_S}
\newcommand{\chunkT}{\Sigma_T}

\title{Paraconsistent Dominated Convergence}

\author{Anggha Nugraha}

\date{}

\begin{document}

\maketitle

\begin{abstract}
The Levi‑Civita field $\mathcal{R}$ of formal Laurent series is a constructive, non-Ar\-chi\-me\-de\-an ordered field that supports a full Lebesgue measure and integration
theory, including a Dominated Convergence Theorem.  This paper embeds that integration theory into the paraconsistent Chunk and Permeate framework, extending it from elementary calculus to genuine measure theory. The source chunk is modelled by $\mathcal{R}$ with its measure and integral, while the target chunk is the classical real line $\mathbb{R}$. A permeability relation exports the standard part of the internal integral, and it is shown that the Dominated Convergence Theorem permeates from the source chunk to the target chunk, yielding the classical Lebesgue Dominated Convergence Theorem without any choice principles and without the ultrafilters required by nonstandard measure theory. The construction is entirely explicit and demonstrates that paraconsistent logic can provide a rigorous foundation for deep analytical tools while keeping inconsistencies safely confined.
\end{abstract}

\section{Introduction}

Infinitesimal reasoning has been both a powerful heuristic and a source of logical difficulty since the origins of calculus.
The classic example is the computation of a derivative $f'(x) = \frac{f(x+h)-f(x)}{h}$ where $h$ is treated as non-zero when dividing and then set to zero at the end.
Taken at face value, this procedure is contradictory.
Classical analysis resolved the issue by eliminating infinitesimals and replacing them with limits, following Cauchy and Weierstrass. Robinson~\cite{robinson1974non} later showed that infinitesimals could be introduced rigorously using model-theoretic ultrapowers, yielding \emph{nonstandard analysis}. In that setting a \emph{transfer principle} guarantees that any first‑order statement true in the standard reals $\R$ is also true in the hyperreals $^*\R$, and conversely. However, the transfer principle relies on the existence of free ultrafilters, which are non-constructive objects whose construction requires the Axiom of Choice. Moreover, the full transfer machinery is far heavier than what is needed for many elementary applications.

An alternative, constructive approach builds a field containing infinitesimals directly from ordinary sequences. The field $\Rzl$ of formal Laurent series in a fixed positive infinitesimal $\epsilon$ is a concrete, totally ordered field that contains $\R$, infinite numbers, and infinitesimals.
In~\cite{nugraha2026constructivefieldinfinitesimals}, this field was studied and used as the \emph{source chunk} in the \emph{Chunk \& Permeate} (C\&P) strategy~\cite{brown2004chunk}. The C\&P strategy splits an inconsistent theory (such as the union of the axioms of $\R$ with the existence of infinitesimals) into two consistent chunks and defines a permeability relation that allows certain formulas to flow from one chunk to the other while keeping contradictions localized. In that application to differential calculus, the source chunk contains $\Rzl$ and a Newton quotient, the target chunk is the complete real line $\R$, and the permeability relation exports the standard part of the Newton quotient as the classical derivative. This yields a constructive, choice-free derivation of the basic rules of differentiation. However, Brown and Priest’s original implementation of C\&P for infinitesimal calculus~\cite{brown2004chunk}, as well as its subsequent extension~\cite{nugraha2026constructivefieldinfinitesimals}, left integral theory unaddressed. Building on those foundations, the present paper provides a formal, non-trivial paraconsistent strategy for integration.

Our main goal is to obtain a \emph{Dominated Convergence Theorem} (DCT) inside a paraconsistent theory and to permeate it to the classical Lebesgue DCT. The DCT is a cornerstone of modern analysis, and its paraconsistent derivation demonstrates that the C\&P framework is robust enough to handle deep
measure-theoretic arguments while remaining constructive and choice-free. To obtain a DCT we need a genuine measure theory on the field $\Rzl$. In earlier work~\cite{nugraha2026constructivefieldinfinitesimals}, $\Rzl$ is constructed as an explicit field of formal Laurent series, and it is showed that its
concrete sequence representation makes it a natural model for the source chunk in a Chunk \& Permeate strategy. Mathematically, $\Rzl$ coincides with the well-studied Levi-Civita field $\mathcal{R}$, so we may freely import the complete Lebesgue integration theory
developed in \cite{shamseddine1999new,
shamseddine2003measure}. The novelty here is not the field itself, but the paraconsistent encapsulation, i.e.
we bring this measure theory into the source chunk $\Sigma_S$, equip it with a permeability relation, and thereby obtain the classical DCT on $\R$ as a permeated consequence, without any appeal to the Axiom of Choice.

The novelty of this work is threefold. \textit{First}, it provides the first paraconsistent presentation of a full Lebesgue integral on a constructive non‑Archimedean field. The integral is not merely a Riemann‑like sum but a genuine measure‑theoretic construct that satisfies the standard convergence theorems. \textit{Second}, it shows that the C\&P strategy handles measure‑theoretic subtleties, not just algebraic contradictions, confirming the robustness of the logical architecture. \textit{Third}, it makes explicit the connection between the Levi-Civita measure theory and paraconsistent logic, opening a bridge between two communities that have largely worked in isolation. It also highlights the constructive, choice-free nature of the whole development, which distinguishes it from nonstandard measure theory (Loeb measures) that depends on ultrafilters.

The paper is organised as follows. Section~\ref{sec:prelim} reviews the field $\Rzl$, microstability, and the C\&P strategy for derivatives. Section~\ref{sec:cp} sets up the two chunks and the permeability relation for integration. Section~\ref{sec:lc-measure} summarises Levi‑Civita measure theory and proves the DCT. Section~\ref{sec:permeation} proves the main permeation theorem. Section~\ref{sec:discussion} concludes with significance and future work. 

\section{The field $\Rzl$ and Chunk \& Permeate}
\label{sec:prelim}

We recall the essentials from~\cite{nugraha2026constructivefieldinfinitesimals}.

\medskip\noindent\textbf{The non‑Archimedean field $\Rzl$.}
The set $\Rzl$ consists of all two‑sided real sequences with only finitely many non‑zero entries on the negative (infinite) side:
\[
\Rzl = \Big\{ \mathbf{x} = \langle\dots, x_{-2},x_{-1},\widehat{x_0},x_1,x_2,\dots\rangle \;\big|\; \exists N\,\forall n<-N:\; x_n=0 \Big\}.
\]
The hat marks the \emph{standard part} $\St(\mathbf{x}) = x_0$. By writing $\epsilon$ as $\langle \widehat{0},1,0,\dots\rangle$, every element becomes a formal Laurent series $\mathbf{x} = \sum_{n=-\infty}^\infty x_n\epsilon^n$.
Addition is defined componentwise, while multiplication is Cauchy convolution. With the lexicographic order, $\Rzl$ is a totally ordered field. The \emph{valuation} $\val(\mathbf{x}) = \min\{n\in\Z : x_n\neq 0\}$ (with $\val(0)=\infty$) is discrete and non-Archimedean. Finite elements are those with $\val(\mathbf{x})\ge 0$. Let $\F = \{\mathbf{x}\in\Rzl : \mathbf{x}\text{ is finite}\}$ denote the set of finite elements. The standard part map $\St : \F \to \R$ is a surjective ring homomorphism whose kernel is the set of infinitesimals ($\val(\mathbf{x})>0$).

\medskip\noindent\textbf{Microstability.}
A function $f:\Rzl\to\Rzl$ is \emph{microstable} if for every $\mathbf{x}$ and every infinitesimal $\boldsymbol{\eta}$,
\[
\St(f(\mathbf{x}+\boldsymbol{\eta})) = \St(f(\mathbf{x})).
\]
Microstability is preserved by addition, multiplication, composition, and by the absolute value. All standard elementary functions are microstable.

\medskip\noindent\textbf{Topologies.}
The valuation induces the \emph{valuation topology} $\tau_v$ via the ultrametric $d(\mathbf{x},\mathbf{y}) = 2^{-\val(\mathbf{x}-\mathbf{y})}$.
A sequence $(\mathbf{x}_n)$ converges in $\tau_v$ ($\mathbf{x}_n \xrightarrow{v} \mathbf{x}$) iff $\val(\mathbf{x}_n-\mathbf{x})\to\infty$, i.e. every coefficient eventually stabilises. The standard part induces the \emph{standard topology} $\tau_{\St}$ via the pseudometric $|\St(\mathbf{x})-\St(\mathbf{y})|$.

\medskip\noindent\textbf{Chunk \& Permeate for derivatives.}
The C\&P strategy~\cite{brown2004chunk} resolves the inconsistency arising from the simultaneous presence of a positive infinitesimal and the completeness of $\R$ by partitioning the theory into two chunks:
\begin{itemize}
  \item \textbf{Source chunk $\Sigma_S$}: contains the field axioms of $\Rzl$ (ordered field, no completeness, no Archimedean property) together with a positive infinitesimal $\epsilon$. The intended model is $\Rzl$ itself.
  \item \textbf{Target chunk $\Sigma_T$}: contains the axioms of a Dedekind-complete Archimedean ordered field. The intended model is the real line $\R$.
\end{itemize}
Neither chunk is inconsistent on its own. The union $\Sigma_S \cup \Sigma_T$ is inconsistent because $\Sigma_S$ asserts the existence of an element smaller than all positive rationals while $\Sigma_T$ asserts that no such element exists. The C\&P strategy avoids explosion by \emph{never reasoning from the union of all axioms}. Instead, any proof is carried out entirely within one chunk, and a \emph{permeability relation} $\rho$ specifies which formulas may pass from one chunk to the other.

In the case of derivatives, the permeability relation allows a formula of the form
\[
\St\!\left(\frac{f(\mathbf{x}+\epsilon)-f(\mathbf{x})}{\epsilon}\right) = g(x)
\]
in $\Sigma_S$ to flow to $\Sigma_T$ as the classical derivative $f'(x) = g(x)$, provided $f$ is \emph{microstable}. Microstability guarantees that the standard part of the Newton quotient is well-defined and that the permeated formula is the correct classical derivative. The contradictions involving $\epsilon$ remain in $\Sigma_S$ and have no image under $\rho$, so they cannot infect $\Sigma_T$. This architecture is the blueprint for the integration theory developed in the present paper.

\section{Chunk and Permeate for Integration}
\label{sec:cp}

Consider the first-order theory $\mathcal{T}$ obtained by joining the following axioms:
\begin{enumerate}[label=(\roman*)]
  \item the axioms of an ordered field,
  \item the existence of a positive infinitesimal $\epsilon$, i.e. an element satisfying $0<\epsilon$ and $\epsilon<\frac{1}{n}$ for every $n\in\N$,
  \item the Dedekind completeness axiom (every non-empty bounded set has a least upper bound), and
  \item the axioms of the classical Lebesgue integral for measurable functions, including the Dominated Convergence Theorem.
\end{enumerate}
This theory is inconsistent, e.g. in any model containing a positive infinitesimal, the set $\{x : 0<x<\epsilon\}$ is non-empty and bounded above, yet it cannot have a least upper bound without contradicting the fact that $\epsilon$ is smaller than every positive rational. A direct merger of infinitesimal reasoning with classical analysis therefore leads to logical explosion under classical logic.

\subsection{The Two Chunks}
The Chunk \& Permeate strategy~\cite{brown2004chunk} resolves this predicament by partitioning the inconsistent theory into two consistent \emph{chunks}, each modelled by a set-theoretic structure. Classical logic is retained inside each chunk and a \emph{permeability relation} $\rho$ controls which formulas may pass from one chunk to the other. Contradictions remain confined to the chunk where they arise, while the permeability relation exports only the consistent, informative consequences.

\medskip\noindent\textbf{The source chunk $\chunkS$.}
The language $\mathcal{L}_S$ of the source chunk extends the language of ordered fields with the following additional symbols: a constant symbol $\epsilon$ (the infinitesimal), a unary function symbol $\St$ (the standard part), a binary function symbol $\Lint(\cdot,\cdot)$ (the Levi‑Civita integral), and a unary function symbol $\Smeas$ (the Levi‑Civita measure, applied to sets, but we treat it as part of the integral notation). The axioms of $\chunkS$ are:
\begin{enumerate}[label=(S\arabic*)]
  \item \textbf{Field axioms.} The usual axioms of an ordered field.
  \item \textbf{Infinitesimal.} $\epsilon>0$ and $\epsilon<\frac{1}{n}$ for every standard $n\in\N$ (where $n$ is the embedding of the natural number as the constant sequence $(n,n,\dots)$).
  \item \textbf{Standard part.} The function $\St$ is defined only on the set of finite elements $\F$ and satisfies $\St(\mathbf{x})=\mathbf{x}_0$ (the $0$‑th coefficient).  Equivalently, $\St$ is a surjective ring homomorphism $\F\to\R$ whose kernel is the set of infinitesimals.
  \item \textbf{Levi-Civita measure.} The axioms of the S‑measure on the Levi‑Civita field as developed in~\cite{shamseddine1999new,shamseddine2003measure} (definition via inner and outer covers by intervals, countable additivity under the natural summability condition, and $\Smeas([a,b])=b-a$).
  \item \textbf{Levi-Civita integral.} The integral $\Lint f\,d\Smeas$ is defined for measurable functions $f$ via the usual limit of integrals of simple functions, and satisfies linearity, monotonicity, the Monotone Convergence Theorem, Fatou's Lemma, and the Dominated Convergence Theorem (Theorem~\ref{thm:lc-dct}).
\end{enumerate}

The axioms (S4) and (S5) are not repeated here in full. They are the standard basis of the measure theory on the Levi-Civita field, which we summarised in Section~\ref{sec:lc-measure}. Crucially, $\chunkS$ does \emph{not} include the Dedekind completeness axiom or the Archimedean property. The intended model of $\chunkS$ is the concrete field $\Rzl$ (Section~\ref{sec:prelim}) equipped with its Levi-Civita measure and integral.

\medskip\noindent\textbf{The target chunk $\chunkT$.}
The language $\mathcal{L}_T$ of the target chunk is the usual language of real analysis which contains the ordered field symbols and a function symbol $\int$ for the classical Lebesgue integral. The axioms are:
\begin{enumerate}[label=(T\arabic*)]
  \item \textbf{Dedekind completeness.} Every non-empty set that is bounded above has a least upper bound.
  \item \textbf{Archimedean property.} For every $x$ there exists $n\in\N$ with $x<n$.
  \item \textbf{Lebesgue integral.} The axioms of the Lebesgue integral for measurable functions (linearity, monotonicity, Monotone Convergence Theorem, Fatou's Lemma, Dominated Convergence Theorem, etc.).
\end{enumerate}

The intended model is the real line $\R$ with the usual Lebesgue measure. The languages $\mathcal{L}_S$ and $\mathcal{L}_T$ share only the ordered field symbols. The symbols $\epsilon$, $\St$, $\Lint$, and $\Smeas$ do not appear in $\mathcal{L}_T$, and the Lebesgue integral $\int$ does not appear in $\mathcal{L}_S$.
This separation is essential for the permeability relation to function as a controlled channel.

\subsection{The Permeability Relation}
The permeability relation $\rho$ specifies exactly which sentences of $\chunkS$ may be exported to $\chunkT$. 

\begin{definition}[Permeability for integrals]
For a microstable function $f$ on a closed interval $[a,b]\subseteq\Rzl$ whose
standard part $\check{f}(x)=\St(f(\mathbf{x}))$ (with $\St(\mathbf{x})=x$) is
Lebesgue integrable, we put
\[
\bigl( \St(\Lint_a^b f\,d\Smeas) = I \;\;,\;\; \int_{\St(a)}^{\St(b)} \check{f}(t)\,dt = I \bigr) \in \rho .
\]
\end{definition}

In plain language, the permeability relation allows us to strip the infinitesimal part of an integral equality and obtain a classical equality of Lebesgue integrals.
The restriction to microstable functions is necessary because otherwise the standard part $\check{f}$ would not be well-defined (its value would depend on the choice of representative inside a monad). All elementary functions are microstable, so this restriction is benign in practice.

Because the permeability relation already covers equalities of the form $\St(\Lint f) = \int \check{f}$ for all microstable integrable functions, the Dominated Convergence Theorem proved inside $\Sigma_S$ together with this relation \emph{entails} the classical Lebesgue Dominated Convergence Theorem for the standard parts $\check{f}_n,\check{f},\check{g}$.  This permeation is carried out in full detail in Section~\ref{sec:permeation}.

\subsection{Managing Inconsistency}
The Chunk \& Permeate strategy now works as follows.
All reasoning involving infinitesimals, the non-Archimedean field $\Rzl$, and the Levi‑Civita integral is carried out inside $\chunkS$, using classical logic but without invoking completeness. The Dominated Convergence Theorem is proved in $\chunkS$ (Theorem~\ref{thm:lc-dct}). When we want to obtain a classical result, we apply the permeability relation: the equality $\St(\Lint f_n) = \int \check{f}_n$ allows us to translate the internal DCT into the classical DCT on $\R$. The crucial point is that the contradictions inherent in the combined theory are sentences of $\mathcal{L}_S$ that have \emph{no} $\rho$-image in $\mathcal{L}_T$. For example, the sentence ``$\epsilon>0$ and $\epsilon<\frac{1}{n}$ for all $n\in\N$'' cannot be matched with any sentence of $\mathcal{L}_T$ via $\rho$, because $\epsilon$ and the standard part map do not belong to $\mathcal{L}_T$. Therefore these contradictions remain isolated in $\chunkS$ and cannot infect $\chunkT$. Explosion is avoided, and the combined theory $\chunkS\cup\chunkT$ is non‑trivial.

This logical architecture is the engine that powers the remainder of the paper. In Section~\ref{sec:lc-measure} we summarise the relevant parts of the Levi-Civita measure theory that realise the axioms (S4) and (S5), and in Section~\ref{sec:permeation} we carry out the permeation of the DCT in full detail.

\section{Measure and Integration on the Levi‑Civita Field}
\label{sec:lc-measure}

The Levi-Civita field $\mathcal{R}$ coincides with our $\Rzl$. We now present the necessary ingredients of the measure theory developed by Shamseddine~\cite{shamseddine1999new,shamseddine2003measure}, which we adopt as the internal integration theory of $\Sigma_S$. 

\medskip\noindent\textbf{Measurable sets and measure.}
A set $E\subseteq\Rzl$ is \emph{S‑measurable} if for every $\epsilon>0$ in $\Rzl$ there exist two families of pairwise disjoint intervals $\{I_n\}$ and $\{J_n\}$
such that
\[
\bigcup_{n=1}^\infty I_n \subseteq E \subseteq \bigcup_{n=1}^\infty J_n,
\qquad
\sum_{n=1}^\infty l(J_n) - \sum_{n=1}^\infty l(I_n) < \epsilon,
\]
where $l([a,b])=b-a$ and both series converge in the valuation topology.
The \emph{S‑measure} of $E$ is then
\[
\Smeas(E) = \lim_{\epsilon\to0}\sum_{n=1}^\infty l(I_n)
          = \lim_{\epsilon\to0}\sum_{n=1}^\infty l(J_n),
\]
independent of the choice of intervals. Every interval $[a,b]$ is S‑measurable with $\Smeas([a,b])=b-a$. Moreover, if $\{A_n\}$ is a sequence of pairwise disjoint S‑measurable sets such that $\sum_{n=1}^\infty \Smeas(A_n)$ converges, then $\bigcup_{n=1}^\infty A_n$ is S‑measurable and $\Smeas(\bigcup_{n=1}^\infty A_n)=\sum_{n=1}^\infty \Smeas(A_n)$. The collection of S‑measurable sets is closed under finite intersections and under countable unions of sets whose measures form a null sequence.

\medskip\noindent\textbf{Integration of measurable functions.}
A function $f:\Rzl\to\Rzl$ is \emph{measurable} if preimages of intervals are measurable. For a measurable set $E\subseteq\Rzl$, we write $\mathbf{1}_E$ for its indicator function, i.e. $\mathbf{1}_E(x)=1$ if $x\in E$ and $\mathbf{1}_E(x)=0$ otherwise. A \emph{simple function} is a finite linear combination of indicator functions of measurable sets. The integral of a non-negative simple function $\phi = \sum_{i=1}^k \alpha_i \mathbf{1}_{E_i}$ (with $\alpha_i\ge0$) is $\Lint \phi\,d\Smeas = \sum_i \alpha_i \Smeas(E_i)$.
For an arbitrary non‑negative measurable function $f$, one defines
\[
\Lint f\,d\Smeas = \sup \Big\{ \Lint \phi\,d\Smeas \mid \phi \text{ simple, } 0\le\phi\le f \Big\}.
\]
A general measurable function $f$ is \emph{integrable} if both its positive and negative parts are integrable, i.e. their integrals are finite; in that case $\Lint f\,d\Smeas = \Lint f^+\,d\Smeas - \Lint f^-\,d\Smeas$.
The integral so defined is linear, monotone, and coincides with the Riemann integral on continuous functions.

\medskip\noindent\textbf{Convergence theorems.}
The Levi-Civita integral satisfies the same fundamental convergence theorems
as the classical Lebesgue integral. Their proofs follow the classical pattern
and rely on the fact that $\Rzl$ is complete with respect to the valuation
topology (indeed, it is a spherically complete valued field,
see~\cite{shamseddine1999new}). We state the two that are needed for the DCT;
both are proved in~\cite{shamseddine1999new}.

\begin{lemma}[Monotone Convergence Theorem]\label{lem:mct}
Let $(h_k)_{k\in\N}$ be an increasing sequence of non‑negative integrable
functions on a measurable set $E\subseteq\Rzl$, i.e.\ $0\le h_1(x)\le
h_2(x)\le\cdots$ for almost every $x\in E$, and suppose $h_k(x)\uparrow h(x)$
pointwise almost everywhere.
Then $h$ is measurable, and
\[
\Lint_E h\,d\Smeas = \lim_{k\to\infty} \Lint_E h_k\,d\Smeas ,
\]
where the limit is taken in the valuation topology.
\end{lemma}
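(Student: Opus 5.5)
The plan is to follow the classical proof of the Monotone Convergence Theorem, replacing every appeal to order completeness (suprema of bounded sets) by convergence in the valuation topology. We then use that $\Rzl$ is complete for $\tau_v$ and spherically complete, as quoted from~\cite{shamseddine1999new}.

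\emph{Measurability of $h$.} We first discard the exceptional null set $N\subseteq E$ on which monotonicity or pointwise convergence fails. Changing $h$ and the $h_k$ on a set of S-measure zero alters neither measurability nor integrals. For an interval $J=(c,\infty)$ we would write
\[
h^{-1}(J)\cap (E\setminus N)=\bigcup_k h_k^{-1}(J)\cap(E\setminus N).
\]
This is an increasing union of S-measurable sets. To invoke the countable-union closure property we need the measures of the successive differences to form a null sequence. Here I would use the standing hypothesis that the limit is taken in $\tau_v$, i.e. that $h_k\to h$ in a sense strong enough that $\Lint_E h_k$ converges, together with monotone bounds $\Smeas(D_k)\cdot\inf\le \Lint (h_{k+1}-h_k)$. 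General intervals then follow by finite intersections and complements relative to $E$.

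\emph{The integral identity.} By monotonicity of $\Lint$, the sequence $a_k=\Lint_E h_k\,d\Smeas$ is increasing and satisfies $a_k\le \Lint_E h\,d\Smeas$. I would first show that $(a_k)$ is Cauchy in $\tau_v$, which means that $\val(a_{k+1}-a_k)\to\infty$. Writing $a_{k+1}-a_k=\Lint_E(h_{k+1}-h_k)$ reduces this to controlling the valuation of integrals of the non-negative increments. This is where the valuation-topology convergence of the simple-function approximations defining the integral enters. Completeness of $\Rzl$ in $\tau_v$ then gives a limit $L$, and $L\le \Lint_E h$ because order is preserved under $\tau_v$-limits. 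For the reverse inequality, take a simple $0\le\phi\le h$ and $c\in\Rzl$ with $0<c<1$. Put $E_k=\{x\in E: h_k(x)\ge c\phi(x)\}$, an increasing measurable family exhausting $E$ almost everywhere. Then
\[
a_k\ge c\sum_i\alpha_i\Smeas(E_i\cap E_k).
\]
Continuity of $\Smeas$ from below along $(E_k)$ comes from countable additivity applied to the disjoint differences, which is permitted because their measures are summable, being bounded by $\Smeas(E_i)$ in the valuation sense. This gives $L\ge c\Lint\phi$. Letting $c\to1$ in $\tau_v$, for instance $c=1-\epsilon^m$ with $m\to\infty$, and taking the supremum over $\phi$ gives $L\ge \Lint_E h$.

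\emph{Main obstacle.} The delicate point is that in a non-Archimedean field an increasing sequence bounded above need not converge, and a supremum over simple functions need not exist. An example is $a_k=1-1/k$, which does not converge in $\tau_v$. So Cauchy-ness in $\tau_v$ cannot come from monotonicity alone. It must be extracted from the structure of Shamseddine's measurable functions, which are locally analytic or power-series-like on a countable partition, as in~\cite{shamseddine1999new}. My first attempt would be to show $\val(\Lint_E(h_{k+1}-h_k))\to\infty$ directly from the pointwise hypothesis, read in the strong sense. If that fails, I would fall back on citing the theorem as proved in~\cite{shamseddine1999new}, where exactly this convergence is established under their notion of measurability.
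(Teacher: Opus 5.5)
There is a genuine gap, and it is the step you flag as the main obstacle. The $\tau_v$-Cauchy property of $a_k=\Lint_E h_k\,d\Smeas$ cannot be extracted from the hypotheses, because the lemma is false as stated.

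\textbf{Counterexample.} Take $E=[0,1]$ and $h_k=\mathbf{1}_{[0,1-1/k]}$. These are non-negative step functions, measurable under the paper's definition and under Shamseddine's locally-analytic one. They increase in $k$. For each $x$ the sequence $h_k(x)$ is eventually constant, so the pointwise hypothesis holds everywhere in the strongest possible reading. Yet $a_k=1-1/k$ is exactly your own non-convergent example, with $\val(a_{k+1}-a_k)=\val\bigl(1/(k(k+1))\bigr)=0$ for all $k$. So your first attempt must fail.

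The fallback fails too. The paper gives no argument for this lemma and only cites \cite{shamseddine1999new}. No correct source can prove the statement, since the functions here are as regular as they come.

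\textbf{Measurability also fails.} The limit is $h=\mathbf{1}_A$ with $A=\{x\in[0,1]:\St(x)<1\}$, and $A$ is not S-measurable.
\begin{itemize}
\item An inner family of disjoint intervals in $A$ with $\tau_v$-convergent length sum has only finitely many non-infinitesimal members. Each has right endpoint of standard part $<1$, so the total length has standard part $<1$.
\item An outer cover of $A$ must have total length of standard part $\ge 1-1/k$ for every $k$. Otherwise its finitely many non-infinitesimal intervals miss a real subinterval of $[0,1-1/k]$. The countably many infinitesimal intervals meet only countably many monads, so some real point of $[0,1-1/k]\subseteq A$ is left uncovered.
\end{itemize}
Hence the inner--outer gap can never be made smaller than $\epsilon$. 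An extra hypothesis is indispensable. For example, assume $h$ is measurable and $(a_k)$ converges in $\tau_v$, or assume $h_k\to h$ uniformly on $E$ in the order topology.

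\textbf{Local errors in your sketch.} The same example exposes them.
\begin{itemize}
\item In the measurability step you appeal to a ``standing hypothesis'' that $\Lint_E h_k$ converges. That is part of the conclusion, so the step is circular. The closure property you need also requires $\Smeas(D_k)\to 0$ in $\tau_v$. This fails here: $D_k=(1-1/k,1-1/(k+1)]$ has measure $1/(k(k+1))$, and the union of the sets $h_k^{-1}((1/2,\infty))$ is the non-measurable $A$.
\item In the reverse inequality you justify continuity of $\Smeas$ from below by bounding the measures of the differences by $\Smeas(E_i)$. In a non-Archimedean field, bounded partial sums do not give $\tau_v$-summability; again $\sum_k 1/(k(k+1))$ is bounded yet $\tau_v$-divergent. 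So countable additivity of the S-measure does not apply.
\item The comparison $a_k\le\Lint_E h$ presupposes that the supremum over simple functions defining $\Lint_E h$ exists. That is part of what has to be shown, and it fails in the example above.
\end{itemize}
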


\begin{lemma}[Fatou's Lemma]\label{lem:fatou}
Let $(f_n)_{n\in\N}$ be a sequence of non‑negative integrable functions on $E$.
Then
\[
\Lint_E \liminf_{n\to\infty} f_n\,d\Smeas \;\le\;
\liminf_{n\to\infty} \Lint_E f_n\,d\Smeas ,
\]
where the limits inferior are taken in the valuation topology.
\end{lemma}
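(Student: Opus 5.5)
The plan is to derive Fatou's Lemma from the Monotone Convergence Theorem (Lemma~\ref{lem:mct}), following the classical pattern. The only real work is to make sure each infimum and limit used actually exists in $\Rzl$, because $\Rzl$ has no Dedekind completeness.

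\medskip\noindent\textbf{Step 1: build an increasing sequence.} For each $k$ and each $x\in E$ put $g_k(x)=\inf_{n\ge k} f_n(x)$. I would realise this infimum as the valuation limit, as $m\to\infty$, of the finite minima $\min_{k\le n\le m} f_n(x)$. These finite minima are decreasing in $m$ and measurable, since preimages of intervals under a finite $\min$ are finite unions and intersections of measurable sets. The statement writes $\liminf_{n\to\infty}$ in the valuation topology. I read this as including the hypothesis that these limits exist almost everywhere, and likewise that $\liminf_{n} f_n(x)=\lim_k g_k(x)$ exists. Under that reading:
\begin{itemize}
  \item $0\le g_1\le g_2\le\cdots$;
  \item $g_k\uparrow \liminf_n f_n$ almost everywhere;
  \item $g_k\le f_n$ for every $n\ge k$.
\end{itemize}
Each $g_k$ is integrable: its measurability passes to the limit by the same argument as in Lemma~\ref{lem:mct}, and monotonicity of $\Lint$ together with $0\le g_k\le f_k$ gives a finite integral.

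\medskip\noindent\textbf{Step 2: compare integrals.} Monotonicity of the integral gives $\Lint_E g_k\,d\Smeas\le \Lint_E f_n\,d\Smeas$ for all $n\ge k$. Hence $\Lint_E g_k\,d\Smeas\le a_k$, where $a_k=\inf_{n\ge k}\Lint_E f_n\,d\Smeas$; this infimum is again understood as a valuation limit of finite minima. The sequence $(a_k)$ is increasing, and by definition $\liminf_n\Lint_E f_n\,d\Smeas=\lim_k a_k$.

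\medskip\noindent\textbf{Step 3: pass to the limit.} Applying Lemma~\ref{lem:mct} to $(g_k)$ gives
\[
\Lint_E \liminf_{n\to\infty} f_n\,d\Smeas=\lim_{k\to\infty}\Lint_E g_k\,d\Smeas .
\]
To conclude I need that non-strict inequalities survive valuation limits, i.e.\ that $\{x\ge0\}$ is closed in $\tau_v$. Suppose $x_k\ge0$ and $x_k\xrightarrow{v}x$, but $x<0$. Then eventually $\val(x_k-x)>\val(x)$, so $x_k$ has the same leading coefficient as $x$, hence $x_k<0$, a contradiction. Applying this to $a_k-\Lint_E g_k\,d\Smeas\ge0$ yields the claimed inequality.

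\medskip\noindent\textbf{Main obstacle.} I expect the difficulty to lie in Step 1. Without completeness, a bounded-below countable set in $\Rzl$ need not have an infimum. Even a decreasing sequence need not converge in $\tau_v$: for example $1/n$ decreases but does not converge in the valuation topology. So the infima defining $g_k$ and $a_k$ are not automatic. I would first try to handle this by making explicit the convention, implicit in the phrase ``limits inferior taken in the valuation topology'', that these pointwise and numerical limits exist. I would check that Shamseddine's formulation~\cite{shamseddine1999new} uses the same convention. Failing that, I would restrict to sequences for which the finite minima stabilise coefficientwise.
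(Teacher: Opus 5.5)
You have written out the argument the paper only gestures at: it gives no proof of its own, defers to Shamseddine, and says the classical pattern applies. Your Steps 1--3 are exactly that pattern. Reading the limits inferior as existing $\tau_v$-limits is reasonable, and Step 2 and the closedness argument in Step 3 are fine. The genuine gap is in Step 1, where you assert that each $g_k=\inf_{n\ge k}f_n$ is integrable; your existence hypotheses do not imply this. On $E=[0,2]$ let
\[
f_n=\mathbf{1}_{[0,1]}-\mathbf{1}_{(\frac{1}{n+1},\frac{1}{n}]}+\mathbf{1}_{(1+\frac{1}{n+1},\,1+\frac{1}{n}]} .
\]
These functions satisfy all your hypotheses:
\begin{itemize}
\item they are non-negative simple functions with $\Lint_E f_n\,d\Smeas=1$;
\item their values lie in $\{0,1\}$, so every infimum exists;
\item each $g_k(x)$ is eventually constant in $k$;
\item $\liminf_n f_n=\mathbf{1}_{[0,1]}$, so both sides of the inequality exist and equal $1$.
\end{itemize}
But on $[0,1]$ one has $g_k=\mathbf{1}_{[0,1]}-\mathbf{1}_{S_k}$ with $S_k=\bigcup_{n\ge k}(\tfrac1{n+1},\tfrac1n]=\{x\in(0,\tfrac1k]:\St(x)>0\}$, and $S_k$ is not S-measurable. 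The reason is as follows. A disjoint family of intervals with $\tau_v$-convergent length sum has only finitely many members of non-infinitesimal length, and an interval of infinitesimal length lies in a single monad. Hence inner approximations of $S_k$ have total length with standard part at most $\tfrac1k-r$ for some real $r>0$, since each long interval inside $S_k$ stays above a positive real. Outer covers, by contrast, have total length with standard part at least $\tfrac1k$. So $g_k$ is not measurable, and Lemma~\ref{lem:mct} cannot be applied to $(g_k)$ in Step 3, even though the inequality itself holds in this example.

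Appealing to ``the same argument as in Lemma~\ref{lem:mct}'' does not repair this. The paper supplies no such argument. The natural reduction is to apply the lemma to the increasing sequence $f_k-\min_{k\le n\le m}f_n$ of non-negative simple functions. That sequence has a non-measurable pointwise limit (on $[0,1]$ it is $\mathbf{1}_{S_{k+1}}$), and its integrals $\tfrac1k-\tfrac1{m+1}$ do not converge in $\tau_v$. This contradicts Lemma~\ref{lem:mct} as stated; so does the simpler sequence $\mathbf{1}_{[0,1-1/k]}$ on $[0,1]$. Relatedly, ``$0\le g_k\le f_k$ gives a finite integral'' presupposes that the supremum defining $\Lint_E g_k\,d\Smeas$ exists. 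Boundedness does not guarantee this in $\Rzl$; it is the same obstacle you rightly flagged for the infima. What is missing is an ingredient that makes each $g_k$ integrable and justifies the monotone-limit step for $(g_k)$. That could be an explicit hypothesis or a stronger (e.g.\ uniform) convergence assumption; pointwise existence of the limits in $\tau_v$ is not enough for the classical reduction to go through in the S-measure setting.
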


\begin{theorem}[Dominated Convergence Theorem in $\Rzl$]\label{thm:lc-dct}
Let $(f_n)_{n\in\N}$ be a sequence of integrable functions on a measurable set $E\subseteq\Rzl$ such that
\begin{enumerate}[label=(\roman*)]
    \item $f_n(x)\to f(x)$ for $\Smeas$-almost every $x\in E$, and
    \item there exists an integrable function $g$ with $|f_n(x)|\le g(x)$ for all $n$ and all $x\in E$.
\end{enumerate}
Then $f$ is integrable and
\[
\Lint_E f\,d\Smeas \;=\; \lim_{n\to\infty} \Lint_E f_n\,d\Smeas ,
\]
where the limit is taken in the valuation topology.
\end{theorem}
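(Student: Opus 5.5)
The plan is to follow the classical Fatou argument, using Lemma~\ref{lem:fatou} (and, for measurability, Lemma~\ref{lem:mct}) as the only analytic input. There is one genuine difference from the classical case: in $\Rzl$ the limit inferior of a sequence must be read in the valuation topology. So I will first check that all the limit operations make sense there, and only then run the two-sided Fatou estimate.

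\textbf{Step 1: integrability of $f$.} Pointwise convergence $f_n(x)\to f(x)$ almost everywhere, together with $|f_n|\le g$, gives $|f(x)|\le g(x)$ almost everywhere, since the order is closed in the valuation topology: $\{y : |y|\le g(x)\}$ is closed. For measurability of $f$, I would write
\[
f^+=\lim_k \inf_{n\ge k} f_n^+ .
\]
Here $\inf_{n\ge k} f_n^+$ is understood as the valuation limit of the finite minima $\min_{k\le n\le m} f_n^+$ as $m\to\infty$. Lemma~\ref{lem:mct} applies to the increasing sequences involved after a sign flip against the integrable dominant $g$, and this yields measurability. Once $f$ is measurable, monotonicity and $|f|\le g$ give that $f^\pm$ are integrable.

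\textbf{Step 2: the two Fatou inequalities.} The functions $g+f_n$ and $g-f_n$ are non-negative and integrable, and they converge almost everywhere to $g+f$ and $g-f$. Applying Lemma~\ref{lem:fatou} to both sequences and using linearity gives
\[
\Lint_E g+\Lint_E f\le \Lint_E g+\liminf_n \Lint_E f_n ,
\qquad
\Lint_E g-\Lint_E f\le \Lint_E g-\limsup_n \Lint_E f_n .
\]
Cancelling the finite quantity $\Lint_E g$ then yields
\[
\limsup_n\Lint_E f_n\le\Lint_E f\le\liminf_n\Lint_E f_n .
\]
From this I conclude that the liminf and limsup coincide, which is the claimed limit.

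\textbf{Main obstacle.} The delicate step is making sense of $\liminf$ and $\limsup$ of the sequence $\Lint_E f_n$ in $\Rzl$. The field is not Dedekind complete, so suprema and infima of bounded sets need not exist. Moreover, order-convergence and valuation-convergence differ: a sequence can be bounded and monotone yet fail to converge in $\tau_v$, for instance $1-\epsilon^{\,\cdot}$-type sequences with coefficients drifting at a fixed level.

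My first attempt would be to interpret the liminf statement in Lemma~\ref{lem:fatou} as an inequality valid \emph{whenever} the valuation limits exist. I would then use the squeeze to show existence, as follows.
\begin{itemize}
\item For each valuation level $q$, compare coefficients up to order $q$.
\item Show that for large $n$, $\Lint_E f_n-\Lint_E f$ is eventually smaller in absolute value than every element of valuation $q$.
\item To get this, apply the Fatou inequalities to the tail sequences $n\ge N$, with $\epsilon$-thresholds of the form $\epsilon^{q}$.
\end{itemize}

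If this fails because of the lack of completeness, the fallback is to appeal directly to the proof of the DCT in~\cite{shamseddine1999new}, which is stated for valuation convergence. In that case I would verify only that hypotheses (i) and (ii) match its assumptions.
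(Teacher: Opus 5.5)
\textbf{There is a genuine gap, and it cannot be closed.} The step you single out as the main obstacle is not a technicality that a finer squeeze can fix: the inequality it needs is false in $\Rzl$, and so is the statement itself. Take $E=[0,1]$, $g=\mathbf{1}_{[0,1]}$ and $f_n=\mathbf{1}_{B_n}$ with $B_n=[n\epsilon,(n+1)\epsilon]$, where $\epsilon$ is the fixed positive infinitesimal.

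\begin{itemize}
\item Each $x\in[0,1]$ lies in at most two of the $B_n$, so $f_n(x)$ is eventually $0$. Hence $f_n\to f=0$ everywhere in the valuation topology, and $|f_n|\le g$.
\item Yet $\Lint_E f_n\,d\Smeas=\epsilon$ for every $n$, which does not converge to $\Lint_E f\,d\Smeas=0$.
\end{itemize}

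This is also a counterexample to Lemma~\ref{lem:fatou}, your only analytic input. For the non-negative sequence $u_n=g-f_n\to g$, Fatou would assert $1\le 1-\epsilon$. The tail version your bullets require (for each $q$, $\Lint_E \liminf_n u_n\,d\Smeas\le\Lint_E u_n\,d\Smeas+\epsilon^{q}$ for all large $n$) fails for every $q\ge 2$.

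The mechanism is that the S-measure is countably additive only over families whose measures are summable in the valuation topology. Here $\sum_n\Smeas(B_n)=\sum_n\epsilon$ diverges, so the mass $\epsilon$ escapes along $(B_n)$. The continuity from below that drives Fatou and monotone convergence is therefore not available.

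Your Step~1 breaks the same way. The sequence $f_n=\mathbf{1}_{[0,1-1/n]}$ is increasing and dominated by $g$. It converges pointwise, indeed eventually constantly, to the indicator of $\{x\in[0,1]:\St(x)<1\}$, which is not S-measurable. Meanwhile $\Lint_E f_n\,d\Smeas=1-1/n$ has no valuation limit at all. So Lemma~\ref{lem:mct} is false as stated, and neither the integrability of $f$ nor the existence of the limit follows from the hypotheses.

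Your argument is the standard two-sided alternative to the paper's single application of Fatou to $2g-|f_n-f|$. The paper's proof fails at exactly the same point: on the first example its Fatou step reads $2\le 2-\epsilon$. It also forms $h_n$ using integrability of $f$ before proving it. So your fallback of citing a DCT from the literature cannot help, because the theorem as formulated has no proof.

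What does survive is the version in which (i) is strengthened to uniform convergence on $E$ and $f$ is assumed measurable. Uniform convergence here means: for every $\delta>0$ in $\Rzl$ there is $N$ with $|f_n-f|<\delta$ on $E$ for all $n\ge N$. Then $|f|\le g+\delta$ on $E$ gives integrability of $f$. Linearity and monotonicity give $|\Lint_E f_n\,d\Smeas-\Lint_E f\,d\Smeas|\le\delta\,\Smeas(E)$ for $n\ge N$. Taking $\delta=\epsilon^{q}$ yields valuation convergence directly, with no Fatou argument and no $\liminf$ in a field that is not Dedekind complete.
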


\begin{proof}
By removing a null set we may assume that $f_n(x)\to f(x)$ for every $x\in E$.
Define $h_n = 2g - |f_n-f|$.  Because $f_n,f,g$ are integrable, so is $h_n$, and $h_n\ge0$.
Moreover $h_n\to 2g$ pointwise on $E$.
Apply Fatou's Lemma~\ref{lem:fatou} to the sequence $(h_n)$:
\[
\Lint_E 2g\,d\Smeas = \Lint_E \liminf_{n\to\infty} h_n\,d\Smeas
\le \liminf_{n\to\infty} \Lint_E h_n\,d\Smeas .
\]
Now compute the right‑hand side:
\[
\Lint_E h_n\,d\Smeas = \Lint_E (2g-|f_n-f|)\,d\Smeas
= 2\Lint_E g\,d\Smeas - \Lint_E |f_n-f|\,d\Smeas .
\]
Hence
\[
2\Lint_E g\,d\Smeas \le \liminf_{n\to\infty}\Bigl(2\Lint_E g\,d\Smeas - \Lint_E |f_n-f|\,d\Smeas\Bigr)
= 2\Lint_E g\,d\Smeas - \limsup_{n\to\infty} \Lint_E |f_n-f|\,d\Smeas .
\]
Since $\Lint_E g\,d\Smeas$ is finite, we obtain
\[
\limsup_{n\to\infty} \Lint_E |f_n-f|\,d\Smeas \le 0 .
\]
The integral of a non-negative function is non-negative, so the $\limsup$ is non-negative, forcing
\[
\lim_{n\to\infty} \Lint_E |f_n-f|\,d\Smeas = 0
\qquad\text{(in the valuation topology).}
\]

Finally,
\[
\bigl|\Lint_E f_n\,d\Smeas - \Lint_E f\,d\Smeas\bigr|
\le \Lint_E |f_n-f|\,d\Smeas \xrightarrow{v} 0 ,
\]
which gives $\Lint_E f_n\,d\Smeas \xrightarrow{v} \Lint_E f\,d\Smeas$.
Integrability of $f$ follows from $|f|\le g$ and the integrability of $g$.
\end{proof}

\medskip\noindent\textbf{Standard part of the integral.}
The connection to classical analysis is given by the following fact.
\begin{lemma}\label{lem:st-int}
If $f$ is microstable and integrable on $[a,b]$, then the real function
$\check{f}(t)=\St(f(\mathbf{t}))$ (where $\mathbf{t}$ is any element with
standard part $t$) is Lebesgue integrable on $[\St(a),\St(b)]$ and
\[
\St\!\left( \Lint_a^b f\,d\Smeas \right) = \int_{\St(a)}^{\St(b)} \check{f}(t)\,dt .
\]
\end{lemma}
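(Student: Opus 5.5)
The approach is to reduce the Levi-Civita integral to Riemann-type sums with standard mesh, and then pass to standard parts term by term.

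\textbf{Step 1.} Let $\alpha=\St(a)$ and $\beta=\St(b)$. Since $\Lint_a^b f\,d\Smeas$ is assumed to have a standard part, $f$ is finite on $[a,b]$; at least $\St(f)$ is defined there. We may assume $a$ and $b$ are finite. Microstability then gives two facts.
\begin{itemize}
\item $\check f$ is well defined on $[\alpha,\beta]$.
\item Replacing $a,b$ by the standard points $\alpha,\beta$ changes the integral only by an infinitesimal. The slivers between $a$ and $\alpha$, and between $b$ and $\beta$, have infinitesimal measure, and $f$ is finite on them, so monotonicity bounds their contribution by a finite number times an infinitesimal.
\end{itemize}
So it suffices to treat $a,b$ standard.

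\textbf{Step 2.} Next I show $\check f$ is continuous on $[\alpha,\beta]$. Fix a standard $t$. For each standard $\delta$, consider
\[
\omega(\delta)=\sup\{|\check f(s)-\check f(t)| : |s-t|<\delta\},
\]
which lives in $\R$. If $\omega(\delta)$ did not tend to $0$, we would pick standard $s_k\to t$ with $|\check f(s_k)-\check f(t)|\ge c>0$. We would then form, in $\Rzl$, a point $t+\boldsymbol\eta$ with $\boldsymbol\eta$ infinitesimal witnessing the failure, contradicting $\St f(t+\boldsymbol\eta)=\check f(t)$.

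This is the delicate step. $\Rzl$ is not saturated, so one cannot simply choose an infinitesimal $\boldsymbol\eta$ realising the limit of the $s_k$. I would first try to use the fact that for the functions of interest, the elementary and power-series functions, $f$ is given by a standard formula. In that case $\check f$ is simply the classical restriction and is continuous. Failing that, I would strengthen the hypothesis to ``$f$ is the canonical extension of a continuous real function $\check f$''. I expect this to be the main obstacle: microstability alone is pointwise and may not force $\check f$ to be Lebesgue integrable. Some uniformity, or a standard formula for $f$, will likely need to be added.

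\textbf{Step 3.} With $\check f$ continuous, hence Riemann and Lebesgue integrable, I compare the integrals on a partition. Take a standard uniform partition $\alpha=t_0<\dots<t_N=\beta$ of $[\alpha,\beta]$. By monotonicity and additivity of the S-measure (interval lengths $t_i-t_{i-1}$), we have
\[
\sum_i m_i (t_i-t_{i-1}) \le \Lint_\alpha^\beta f\,d\Smeas \le \sum_i M_i (t_i-t_{i-1}),
\]
where $m_i,M_i\in\Rzl$ are bounds for $f$ on the $i$-th subinterval. These bounds exist because $f$ is finite there. We can take $\St m_i$ and $\St M_i$ within $\omega(1/N)$ plus an arbitrarily small standard amount of $\min$ and $\max$ of $\check f$. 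Here microstability is used to control $f$ at nonstandard points through $\check f$ at their standard parts.

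\textbf{Step 4.} Since $\St$ is an order-preserving ring homomorphism on $\F$, applying it to the inequalities of Step 3 gives
\[
L_N \le \St\Bigl(\Lint_\alpha^\beta f\,d\Smeas\Bigr) \le U_N,
\]
where $L_N$ and $U_N$ are the lower and upper Darboux sums of $\check f$, up to errors tending to $0$. As $N\to\infty$, both $L_N$ and $U_N$ converge to $\int_\alpha^\beta\check f(t)\,dt$. The real number $\St(\Lint_a^b f\,d\Smeas)$ is squeezed between them, which yields the identity.
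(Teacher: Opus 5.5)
There is a genuine gap at Step~2. The problem is worse than a missing saturation argument: continuity of $\check f$ cannot be derived from microstability at all. Microstability only says that $\St\circ f$ is constant on each monad, which is exactly what makes $\check f$ well defined, and nothing more. For an arbitrary $F:\R\to\R$, the function $f(\mathbf{x})=F(\St(\mathbf{x}))$ is microstable with $\check f=F$. So microstability alone gives no continuity, boundedness, or even measurability of $\check f$. This holds equally in a saturated hyperreal field, because such an $f$ is not internal. The overspill-type argument you sketch needs internality, which nothing here supplies, so the lack of saturation in $\Rzl$ is not the real obstruction. Your fallback, strengthening the hypothesis to ``$f$ is the canonical extension of a continuous real function'', proves a different and much weaker lemma.

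What is missing is any real use of the \emph{integrability} hypothesis. You invoke it only to infer that $f$ is finite, and that inference runs the wrong way: a finite integral says nothing pointwise (take $1/\epsilon$ on an interval of length $\epsilon^2$). Finiteness of $f$ is instead presupposed by $\check f$ being defined. Microstability then gives the uniform bound $|f|\le|\check f(\alpha)|+1$ on the monad of $\alpha$, which is what your sliver estimate in Step~1 actually needs.

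The paper takes a different route. It only sketches it and defers the details to Shamseddine, but the idea is to extract the regularity of $\check f$ from S-measurability, not from microstability. Concretely:
\begin{enumerate}[nosep]
  \item An S-measurable $A\subseteq[a,b]$ is squeezed between unions of disjoint intervals whose lengths sum in the valuation topology. Hence only finitely many of those intervals have non-infinitesimal length.
  \item Their images under $\St$ are therefore finite unions of real intervals plus countable sets. This makes $\St(A)$ Lebesgue measurable with $\lambda(\St(A))=\St(\Smeas(A))$.
  \item By microstability, $\check f^{-1}((-\infty,r))=\bigcup_{m}\St\bigl(f^{-1}((-\infty,r-1/m))\cap[a,b]\bigr)$, so $\check f$ is measurable.
  \item The integral identity then follows by sandwiching $f$ between simple functions built on the standard level sets $f^{-1}([r_j,r_{j+1}))$ and taking standard parts.
\end{enumerate}
In short, you should partition the range with standard cut points rather than the domain. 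Your Darboux sandwich in Steps~3--4 is sound; microstability even lets you take $M_i=\sup_{[t_{i-1},t_i]}\check f+\delta$ with standard $\delta>0$. But it can only pin down $\St(\Lint_a^b f\,d\Smeas)$ once $\check f$ is already known to be Riemann integrable, and that is precisely the regularity microstability cannot give you.
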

The projection of an S‑measurable set to $\mathbb{R}$ is Lebesgue measurable. This follows from the definition of the S‑measure via interval covers and the
fact that the standard part of a finite interval is a real interval. Consequently $\check{f}$ is measurable as the pointwise limit of measurable simple functions. This is a standard result in Levi‑Civita integration
(see~\cite{shamseddine1999new}).

\section{Permeation of the Dominated Convergence Theorem}
\label{sec:permeation}

We have now established that, inside the source chunk $\Sigma_S$, the Levi-Civita integral on $\Rzl$ satisfies the Dominated Convergence Theorem (Theorem~\ref{thm:lc-dct}) with limits taken in the valuation topology. Our goal is to transport this result across the permeability relation $\rho$ and thereby obtain the classical Lebesgue Dominated Convergence Theorem on $\R$ as a permeated consequence. Achieving this would show that the C\&P strategy is
robust enough to recover a fundamental theorem of real analysis, not just elementary calculus, and that it does so without any appeal to the Axiom of Choice.  This transportation involves three main tasks:
\begin{enumerate}[nosep]
  \item lift the pointwise convergence and domination from $\Rzl$ to the real standard parts,
  \item relate the Levi‑Civita integral of a microstable function to the Lebesgue integral of its standard part, and
  \item pass the limit of integrals through the standard part map.
\end{enumerate}
Each of these tasks corresponds to a component of the permeability relation, and together they yield a rigorous derivation of the classical theorem.

\subsection{Statement of the permeation theorem}

\begin{theorem}[Permeation of the DCT]\label{thm:permeation}
Let $f_n, f, g$ be microstable functions on $[0,1]\subseteq\Rzl$ that satisfy the hypotheses of Theorem~\ref{thm:lc-dct} inside $\Sigma_S$, that is,
\begin{enumerate}[label=(\roman*)]
  \item $f_n(x)\to f(x)$ for $\Smeas$-almost every $x\in[0,1]$,
  \item $|f_n(x)|\le g(x)$ for all $n$ and all $x$, with $g$ integrable.
\end{enumerate}
Define real functions on the unit interval by
\[
\check{f}_n(t) = \St(f_n(\mathbf{t})), \qquad
\check{f}(t)   = \St(f(\mathbf{t})),   \qquad
\check{g}(t)   = \St(g(\mathbf{t})),
\]
where for each $t\in[0,1]$ we choose an arbitrary $\mathbf{t}\in[0,1]\subset\Rzl$ with $\St(\mathbf{t})=t$ (the choice is irrelevant by microstability).
Then
\begin{enumerate}[label=(\roman*)]
  \item $\check{f}_n \to \check{f}$ pointwise almost everywhere in the sense of Lebesgue measure;
  \item $|\check{f}_n(t)|\le \check{g}(t)$ for all $t\in[0,1]$;
  \item $\check{f}$ is Lebesgue integrable, and
  \[
  \int_0^1 \check{f}(t)\,dt = \lim_{n\to\infty} \int_0^1 \check{f}_n(t)\,dt .
  \]
\end{enumerate}
In other words, the classical Lebesgue Dominated Convergence Theorem is a permeated consequence of the inconsistent chunk $\Sigma_S$.
\end{theorem}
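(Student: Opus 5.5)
The plan is to prove the three conclusions in order. Each one comes from the corresponding internal fact in $\Sigma_S$, pushed through the standard part map $\St$. Item (iii) will be obtained by combining Theorem~\ref{thm:lc-dct} with Lemma~\ref{lem:st-int}.

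\textbf{Step (ii), domination.} This is the easiest step. Fix $t\in[0,1]$ and a representative $\mathbf{t}$ with $\St(\mathbf{t})=t$. The hypothesis gives $|f_n(\mathbf{t})|\le g(\mathbf{t})$ in $\Rzl$. I will implicitly assume that $g(\mathbf{t})$ is finite, which is needed anyway for $\check g$ to be defined. Then $f_n(\mathbf{t})$ is finite as well. Since $\St$ is an order-preserving ring homomorphism on $\F$ that commutes with $|\cdot|$ (if $\mathbf{x}\le\mathbf{y}$ then $\St\mathbf{x}\le\St\mathbf{y}$), we get $|\check f_n(t)|\le\check g(t)$. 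Microstability makes this independent of the choice of $\mathbf{t}$.

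\textbf{Step (i), pointwise convergence.} There are two sub-points.
\begin{itemize}
\item Convergence in the valuation topology implies convergence of standard parts. If $\val(f_n(\mathbf{t})-f(\mathbf{t}))\to\infty$, then eventually $\val>0$, so $\St f_n(\mathbf{t})=\St f(\mathbf{t})$ for all large $n$. Hence $\check f_n(t)\to\check f(t)$, and in fact the sequence is eventually constant.
\item The exceptional set must be handled. Let $N\subseteq[0,1]$ be the $\Smeas$-null set off which convergence holds. Let $N'$ be the set of $t$ such that every representative $\mathbf{t}$ of $t$ lies in $N$. For $t\notin N'$, pick a representative outside $N$; by microstability this gives convergence at $t$. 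It remains to show that $N'$ is Lebesgue-null. A monad has positive S-measure only if it is infinitesimally thin, so a null set cannot contain a full monad $\St^{-1}(t)\cap[0,1]$ unless\ldots{} Here I would instead argue with covers. From covers $\{J_k\}$ of $N$ with $\sum l(J_k)<\delta$ for a real $\delta>0$, the standard parts $[\St a_k,\St b_k]$ cover $N'$ up to endpoints, with total real length at most $\St\sum l(J_k)\le\delta$. This uses the fact quoted after Lemma~\ref{lem:st-int}, that projections of S-measurable sets behave like interval covers. Hence $N'$ has Lebesgue outer measure $0$.
\end{itemize}

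\textbf{Step (iii), the integrals.} First, $\check f$ is measurable as an a.e.\ limit of the measurable functions $\check f_n$; these are measurable by Lemma~\ref{lem:st-int}. Integrability of $\check f$ then follows from step (ii), since $|\check f|\le\check g$ and Lemma~\ref{lem:st-int} gives $\int\check g<\infty$. Next, Lemma~\ref{lem:st-int} applied to $f_n$ and to $f$ gives
\[
\St\Bigl(\Lint_0^1 f_n\,d\Smeas\Bigr)=\int_0^1\check f_n\,dt \quad\text{and}\quad \St\Bigl(\Lint_0^1 f\,d\Smeas\Bigr)=\int_0^1\check f\,dt .
\]
These are precisely the pairs admitted by $\rho$. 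Theorem~\ref{thm:lc-dct} gives $\Lint f_n\xrightarrow{v}\Lint f$. By the first sub-point of step (i), applied to this sequence of finite elements, the standard parts are eventually equal. Therefore $\int_0^1\check f_n\to\int_0^1\check f$, which is (iii).

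\textbf{Main obstacle.} I expect the main obstacle to be the null-set transfer in step (i), namely showing that an $\Smeas$-null set projects to a Lebesgue-null set. The delicate point is that the valuation-convergent series $\sum l(J_k)$ must be related to a real sum of standard lengths. I would handle this by first reducing, via valuation convergence, to finitely many intervals carrying all of the non-infinitesimal length. The remaining tail has infinitesimal total length, and each of its intervals projects to a degenerate or small real interval. Since this tail is countable, it contributes Lebesgue measure zero.
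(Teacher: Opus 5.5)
Your proposal is correct and follows essentially the same route as the paper's proof: project the interval covers of the $\Smeas$-null exceptional set to get a Lebesgue-null set, push the domination through the order-preserving map $\St$, identify $\St(\Lint f_n)$ and $\St(\Lint f)$ with the Lebesgue integrals via Lemma~\ref{lem:st-int}, and pass the valuation limit of Theorem~\ref{thm:lc-dct} through $\St$. Your deviations are only refinements. You work with the smaller exceptional set $N'$, since one good representative suffices by microstability. You also make explicit that all but finitely many cover intervals have infinitesimal length and so project to single points, which is exactly what justifies the paper's one-line claim that the projected lengths sum to $\St\bigl(\sum_i (b_i-a_i)\bigr)$.
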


\subsection{Proof of the permeation theorem}

\noindent\textbf{1. From $\Smeas$-almost everywhere convergence to Lebesgue‑almost everywhere convergence.}
Let $E\subseteq[0,1]\subset\Rzl$ be the exceptional set on which $f_n(x)\not\to f(x)$.  By hypothesis, $\Smeas(E)=0$.
Consider the set of real points
\[
\check{E} = \{ t\in[0,1] \mid \exists\,\mathbf{t}\in E \text{ with } \St(\mathbf{t})=t \}.
\]
We claim that $\check{E}$ is Lebesgue‑measurable and has Lebesgue measure zero.
To see this, recall that $\Smeas(E)=0$ means that for every standard $k\in\N$
there exists a countable cover of $E$ by intervals $[a_i,b_i]\subset\Rzl$
with finite endpoints such that $\sum_i (b_i-a_i) < 1/k$ (as an element of $\Rzl$).
Because $E\subseteq[0,1]$, we may intersect each interval with $[0,1]$ and he resulting sets are still intervals (or empty), their lengths do not increase,
and all endpoints lie in $[0,1]$, hence are finite. Applying the standard part map, the projected intervals
$[\St(a_i),\St(b_i)]$ cover $\check{E}$ and the sum of their lengths is
exactly $\St(\sum_i (b_i-a_i))$, which is $<1/k$ in $\R$.
Thus $\check{E}$ can be covered by intervals of arbitrarily small total length, so its Lebesgue outer measure (and hence its Lebesgue measure) is zero.

Now take any $t\in[0,1]\setminus\check{E}$.  For every $\mathbf{t}$ with $\St(\mathbf{t})=t$ we have $\mathbf{t}\notin E$, hence $f_n(\mathbf{t})\to f(\mathbf{t})$ in the valuation topology.
By definition of $\check{f}_n$ and microstability (which guarantees the standard part is independent of the choice of $\mathbf{t}$), we have $\St(f_n(\mathbf{t})) = \check{f}_n(t)$, and similarly $\St(f(\mathbf{t})) = \check{f}(t)$.
Since valuation convergence of a sequence of finite elements implies convergence of the $0$‑th coefficients, we obtain $\check{f}_n(t)\to\check{f}(t)$.
Thus $\check{f}_n\to\check{f}$ pointwise on $[0,1]\setminus\check{E}$, which has full Lebesgue measure. This proves statement (i).

\smallskip
\noindent\textbf{2. Domination of the real functions.}
From the pointwise inequality $|f_n(x)|\le g(x)$ for all $x\in[0,1]\subset\Rzl$, we apply the standard part map, which is order-preserving on finite elements.
For any $t\in[0,1]$, pick $\mathbf{t}$ with $\St(\mathbf{t})=t$.
Then
\[
|\check{f}_n(t)| = |\St(f_n(\mathbf{t}))| = \St(|f_n(\mathbf{t})|) \le \St(g(\mathbf{t})) = \check{g}(t).
\]
The equality uses microstability, and the inequality uses the fact that $|f_n(\mathbf{t})|\le g(\mathbf{t})$ elementwise. This yields statement (ii).

\smallskip
\noindent\textbf{3. Relationship between the Levi‑Civita integral and the Lebesgue integral.}
Lemma~\ref{lem:st-int} states that for any microstable integrable function $h$ on $[0,1]$,
\[
\St\!\left( \Lint_0^1 h\,d\Smeas \right) = \int_0^1 \check{h}(t)\,dt .
\]
Applying this to $h=f_n$ and $h=f$ (the integrability of $f$ follows from the DCT inside $\Sigma_S$, Theorem~\ref{thm:lc-dct}), we obtain
\[
\St(\Lint f_n) = \int \check{f}_n , \qquad
\St(\Lint f)   = \int \check{f} .
\]

\smallskip
\noindent\textbf{4. Passing the limit through the standard part.}
Inside $\Sigma_S$, Theorem~\ref{thm:lc-dct} yields
\[
\Lint_0^1 f_n\,d\Smeas \;\xrightarrow{v}\; \Lint_0^1 f\,d\Smeas .
\]
All integrals are finite because they are bounded by $\Lint g$, which is finite. If a sequence of finite elements converges in the valuation topology, then
their standard parts converge as real numbers.
Hence
\[
\lim_{n\to\infty} \St(\Lint f_n) = \St(\Lint f) .
\]
Combining this with the equalities from step~3 gives
\[
\lim_{n\to\infty} \int_0^1 \check{f}_n(t)\,dt = \int_0^1 \check{f}(t)\,dt .
\]

\smallskip
\noindent\textbf{5. Integrability of $\check{f}$.}
From step 2 we have $|\check{f}|\le \check{g}$ pointwise.
The function $\check{g}$ is Lebesgue integrable by Lemma~\ref{lem:st-int} applied to $g$. Therefore $\check{f}$ is dominated by an integrable function. In particular, it is measurable (as a pointwise limit of the measurable functions $\check{f}_n$) and its integral is finite, so it is Lebesgue integrable. This completes the proof of Theorem~\ref{thm:permeation}.

\medskip 
The proof shows exactly how the permeability relation $\rho$ operates. The equality $\St(\Lint f) = \int \check{f}$ connects the internal integral in $\Sigma_S$ to the classical integral in $\Sigma_T$, and the logical structure of the DCT is preserved through this connection. No contradictions cross from $\Sigma_S$ to $\Sigma_T$, because the statements that involve $\epsilon$ or the non-Archimedean field are never placed in the domain of $\rho$. Thus the classical Lebesgue Dominated Convergence Theorem is rigorously obtained as a permeated consequence of the paraconsistent source chunk.

\section{Discussion}
\label{sec:discussion}

The successful embedding of the Levi‑Civita Dominated Convergence Theorem into a Chunk~\&~Permeate framework shows that the paraconsistent strategy is not limited to elementary calculus but can accommodate sophisticated
measure‑theoretic arguments. The choice of the Dominated Convergence Theorem is deliberate. It is arguably the single most important convergence theorem
in Lebesgue integration, underpinning countless results in Fourier analysis, probability theory, functional analysis, and partial differential equations. A paraconsistent derivation of the DCT therefore demonstrates that the C\&P strategy can handle the deep analytical machinery that modern mathematics depends on.  If the C\&P framework can recover the DCT, there is no principled reason it could not recover the other standard convergence theorems (Monotone Convergence, Fatou's Lemma, Fubini's Theorem) by similar permeability
arguments.

The present work extends the Chunk \& Permeate programme from the elementary calculus of derivatives~\cite{nugraha2026constructivefieldinfinitesimals} to the much deeper realm of measure theory. This demonstrates that paraconsistent logic, far from being a mere logical curiosity, can provide a rigorous foundation for substantial parts of modern
analysis. The C\&P methodology manages inconsistency not by eliminating it, but by controlling the flow of information, and the success of the DCT permeation
shows that this control is precise enough for measure-theoretic arguments.

Our approach is constructive and choice-free. The Levi‑Civita field is built from ordinary real sequences with componentwise operations. No ultrafilter or Axiom of Choice is required. This is a significant philosophical and technical advantage over traditional
nonstandard measure theory (Loeb measures), which relies on an ultrapower construction and countable saturation~\cite{cutland1983nonstandard}. Moreover, the explicit sequence representation makes the whole theory
amenable to formalisation in proof assistants such as Coq or Lean, a direction that can be pursued in future work.

The connection with the Levi-Civita field also places our work within a broader mathematical context. Shamseddine's measure theory on the Levi‑Civita
field is a fully developed Lebesgue integration theory that includes not only the DCT but also Fubini's theorem, the Radon-Nikodym theorem, and other
standard results. By embedding this theory into a C\&P chunk, we have provided a new logical perspective on Levi‑Civita integration. The fact that the internal DCT is a theorem of the Levi‑Civita field means that the paraconsistent chunk is mathematically rich and that the permeability relation then acts as a faithful
projection onto classical analysis.

Furthermore, we note that the recently introduced L‑measure on the Levi‑Civita field~\cite{restrepo2023new} improves on the measure theory of Shamseddine and Berz by making the family of measurable sets a $\sigma$-algebra closed under
complements. While the S‑measure we have used is entirely sufficient for proving the DCT, the stronger closure properties of the L‑measure would simplify several technical verifications, e.g.\ showing that the projection of a measurable set onto $\R$ is Lebesgue measurable becomes immediate. Incorporating that measure into the source chunk would therefore further
streamline the permeability arguments, though it is not required for the current results.

Several open problems deserve further investigation.
First, the proof‑theoretic strength of the DCT inside $\Sigma_S$ should be calibrated. Does the DCT require any non‑constructive choice principles, or is
it provable in a completely intuitionistic setting?  Answering this would connect the present work to constructive reverse mathematics~\cite{sanders2013connection}. Second, the framework should be extended to stochastic integration and to models of mathematical physics that
use infinitesimal probabilities, such as those
in~\cite{calude2020infinitesimal,smarandache2026infinitesimal}. Finally, the mechanisation of the theory in a proof assistant would provide a fully verified implementation of non-Archimedean integration.

\section{Conclusion}

We have provided a constructive, paraconsistent derivation of the classical Lebesgue Dominated Convergence Theorem by combining the measure theory on the Levi‑Civita field with the Chunk \& Permeate methodology.  The source chunk, modelled by $\Rzl$ with its integral, allowed us to prove an internal DCT,
which was then faithfully exported to the real line via a permeability relation. The whole development avoids non-constructive choice principles and offers a transparent foundation for analysis with infinitesimals. We hope these results encourage further exploration of paraconsistent logic in measure theory, probability, and mathematical physics, and we expect that the framework can be extended to other convergence theorems and to formalisation in proof assistants.


\bibliographystyle{plainurl}
\bibliography{references}
\end{document}